\newtheorem{theorem}{Theorem}
\theoremstyle{plain}
\newtheorem{definition}{Definition}
\newtheorem{example}{Example}
\newtheorem{lemma}{Lemma}
\numberwithin{equation}{section}
\begin{document}

\title[Hermite-Hadamard type inequalities]{The Hermite-Hadamard type inequalities
for operator $s$-convex functions}
\author[Ghazanfari]{ Amir G. Ghazanfari}
\address{Department of Mathematics\\
Lorestan University, P.O. Box 465\\
Khoramabad, Iran.}

\email{ghazanfari.amir@gmail.com}
\maketitle
\begin{abstract}
In this paper we introduce operator $s$-convex
functions and establish some Hermite-Hadamard type inequalities in which some operator $s$-convex
functions of positive operators in Hilbert spaces are involved.\\
Keywords:  The Hermite-Hadamard inequality, $s$-convex functions, operator s-convex functions.
\end{abstract}
\section{introduction}

The following inequality holds for any convex function $f$ defined on $\mathbb{R}$ and $a,b \in \mathbb{R}$, with $a<b$
\begin{equation}\label{1.1}
f\left(\frac{a+b}{2}\right)\leq \frac{1}{b-a}\int_a^b f(x)dx\leq \frac{f(a)+f(b)}{2}
\end{equation}
Both inequalities hold in the reversed direction if $f$ is concave. The inequality (\ref{1.1}) is
known in the literature as the Hermite-Hadamard's inequality. We note that
the Hermite-Hadamard's inequality may be regarded as a refinement of the concept of convexity and it follows easily from
Jensen's inequality.
The classical Hermite-Hadamard inequality provides estimates of the mean value
of a continuous convex function $f : [a, b] \rightarrow \mathbb{R}$.

In the paper \cite{hud} Hudzik and Maligranda considered, among others, two classes of functions which are $s$-convex in
the first and second senses. These classes are defined in the following way: a function $f:[0,\infty)\to \mathbb{R}$ is
said to be \emph{$s$-convex in the first sense} if
\begin{equation*}
f(\alpha x+\beta y)\leq\alpha^{s}f(x)+\beta^{s}f(y)
\end{equation*}
holds for all $x,y\in [0,\infty)$ and $\alpha,\beta\geq0$ with $\alpha^{s}+\beta^{s}=1$. The class of $s$-convex functions in
the first sense is usually denoted with $K_{s}^{1}$.

 A function $f:\mathbb{R}^+\rightarrow \mathbb{R}$ where $\mathbb{R}^+=[0,+\infty)$, is said to be $s$-convex in the second
sense if
 \begin{equation*}
f(\lambda x+(1-\lambda)y)\leq\lambda^{s}f(x)+(1-\lambda)^{s}f(y)
\end{equation*}
holds for all $x,y\in [0,\infty),\lambda\in [0,1]$ and for some fixed $s\in (0,1]$. The class of $s$-convex functions in the second
sense is usually denoted with $K_{s}^{2}$.
It can be easily seen that for $s=1$, $s$-convexity reduces to ordinary convexity of functions defined on $[0,\infty)$.\\
 It is proved in \cite{hud} that if $s\in (0,1)$ then $f\in K_{s}^{2}$ implies $f([0,\infty))\subseteq[0,\infty)$, i.e., they proved
 that all functions from $K_{s}^{2},s\in (0,1)$, are nonnegative. The following example can be found in \cite{hud}.

\begin{example} \label{e1}
Let $s\in (0,1)$ and $a,b,c\in \mathbb{R}$. We define function $f:[0,\infty)\to \mathbb{R}$ as \\
\[
        f(t)=\left\{\begin{array}{rcl}
        a, $\qquad\qquad$  t=0,\\
        bt^{s}+c, $\qquad $ t>0.\\
        \end{array}\right.
\]
It can be easily checked that
\begin{enumerate}
\item[(i)]
If $b\geq0$ and $0\leq c\leq a$, then $f\in K_{s}^{2}$,
\item[(ii)]
If $b>0$ and $c<0$, then $f\not \in K_{s}^{2}$.
\end{enumerate}
\end{example}

In Theorem 4 of \cite{hud} both definitions of the $s$-convexity have been compared as follows:\\
(i) Let $0<s\leq1$. If $f\in K^2_s$ and $f(0)=0$, then $f\in K^1_s$,\\
(ii) Let $0<s_1<s_2\leq1$. If $f\in K^2_{s_2}$ and $f(0)=0$, then $f\in K^2_{s_1}$,\\
(iii) Let $0<s_1<s_2\leq1$. If $f\in K^1_{s_2}$ and $f(0)\leq0$, then $f\in K^1_{s_1}$.

In \cite{dra1}, Dragomir and Fitzpatrick proved the following variant of Hadamard's
inequality which holds for $s$-convex functions in the second sense:

\begin{theorem}\label{t1}
Suppose that $f:[0,\infty)\to [0,\infty)$ is an $s$-convex function in the second sense, where $s\in (0,1)$ and let
$a,b\in [0,\infty)$, $a< b$. If $f\in L^{1}[a,b]$, then the following inequalities hold:
\begin{equation}\label{1.2}
2^{s-1}f\left(\frac{a+b}{2}\right)\leq\frac{1}{b-a}\int_{a}^{b}f(x)dx\leq\frac{f(a)+f(b)}{s+1}
\end{equation}
the constant $k=\frac{1}{s+1}$ is the best possible in the second inequality in (\ref{1.2}).
The above inequalities are sharp.
\end{theorem}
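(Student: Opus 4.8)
The plan is to obtain both inequalities by integrating the defining inequality of $s$-convexity in the second sense over a one-parameter family of convex combinations, and then to verify sharpness on an explicit example.

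For the left-hand inequality, I would write the midpoint as a convex combination in two symmetric ways: for each $t\in[0,1]$,
\[
\frac{a+b}{2}=\frac12\bigl(ta+(1-t)b\bigr)+\frac12\bigl((1-t)a+tb\bigr).
\]
Applying the definition with $\lambda=\tfrac12$ gives
\[
f\Bigl(\tfrac{a+b}{2}\Bigr)\le\Bigl(\tfrac12\Bigr)^{s}f\bigl(ta+(1-t)b\bigr)+\Bigl(\tfrac12\Bigr)^{s}f\bigl((1-t)a+tb\bigr).
\]
Integrating in $t$ over $[0,1]$ and using the substitutions $x=ta+(1-t)b$ and $x=(1-t)a+tb$, each integral on the right equals $\frac{1}{b-a}\int_a^b f(x)\,dx$; here the hypothesis $f\in L^1[a,b]$ is what guarantees these quantities are finite. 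Thus $f(\tfrac{a+b}{2})\le 2^{1-s}\,\frac{1}{b-a}\int_a^b f(x)\,dx$, which is the first asserted inequality after multiplying through by $2^{s-1}$.

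For the right-hand inequality, I would represent an arbitrary $x\in[a,b]$ as $x=\lambda a+(1-\lambda)b$ with $\lambda=\frac{b-x}{b-a}$, so that $s$-convexity yields
\[
f(x)\le\Bigl(\frac{b-x}{b-a}\Bigr)^{s}f(a)+\Bigl(\frac{x-a}{b-a}\Bigr)^{s}f(b).
\]
Integrating over $x\in[a,b]$ and dividing by $b-a$, the substitutions $u=\frac{b-x}{b-a}$ and $u=\frac{x-a}{b-a}$ turn the two resulting integrals into $\int_0^1 u^{s}\,du=\frac{1}{s+1}$ each, giving $\frac{1}{b-a}\int_a^b f(x)\,dx\le\frac{f(a)+f(b)}{s+1}$.

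Finally, for the claim that $k=\frac{1}{s+1}$ is best possible in the second inequality, I would test $f(t)=t^{s}$ on $[a,b]=[0,1]$. One first checks that $t\mapsto t^{s}$ belongs to $K^2_s$: after dividing by $y^{s}$ this reduces to the scalar inequality $(\lambda r+1-\lambda)^{s}\le\lambda^{s}r^{s}+(1-\lambda)^{s}$ for $r\ge0$, $\lambda\in[0,1]$, which follows by noting both sides agree at $r=0$ and the right side grows faster (differentiate in $r$). For this $f$ one has $\frac{1}{b-a}\int_a^b f=\int_0^1 t^{s}\,dt=\frac{1}{s+1}=\frac{f(0)+f(1)}{s+1}$, so the second inequality becomes an equality and no smaller constant can work. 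The sharpness of $2^{s-1}$ in the first inequality is obtained in a similar fashion from explicitly constructed $s$-convex functions (for instance of the piecewise type in Example~\ref{e1}). I expect the only genuinely delicate point of the whole proof to be this sharpness part — specifically, confirming that the extremal test functions really lie in $K^2_s$ and that the elementary integrals are exact; the two displayed inequalities themselves follow immediately once the convex-combination parametrizations are in place.
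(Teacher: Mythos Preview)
The paper does not actually prove Theorem~\ref{t1}; it is quoted from Dragomir and Fitzpatrick \cite{dra1} and left without proof. However, the paper does prove the operator analogue (Theorem~\ref{t6}), and the method used there is exactly your method for the two inequalities: the right inequality comes from integrating $f((1-t)A+tB)\le (1-t)^s f(A)+t^s f(B)$ over $t\in[0,1]$, and the left inequality comes from writing $\frac{A+B}{2}$ as the average of $tA+(1-t)B$ and $(1-t)A+tB$, applying $s$-convexity with $\lambda=\tfrac12$, and integrating. So your derivation of the two inequalities in \eqref{1.2} is correct and matches the paper's own technique in the one place where the paper does carry out the argument.

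The sharpness and best-constant assertions are not addressed anywhere in the paper, so there is nothing to compare your sharpness argument against. Your check that $f(t)=t^s$ on $[0,1]$ realizes equality in the second inequality is correct and does establish that $\frac{1}{s+1}$ is best possible. Your remark about sharpness of the left inequality is vaguer; note in particular that $f(t)=t^s$ does \emph{not} give equality there when $s<1$ (one gets $2^{s-1}(1/2)^s=\tfrac12$ versus $\int_0^1 t^s\,dt=\tfrac{1}{s+1}$), so a genuinely different example from the piecewise family is indeed required, and you have correctly flagged this as the one point needing further work.
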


The Hermite-Hadamard inequality has several applications in nonlinear analysis and
the geometry of Banach spaces, see \cite{kik}.
In recent years several extensions and generalizations have been considered for
classical convexity. We would like to refer the reader to \cite{bar, dra3, wu} and references
therein for more information. A number of papers have been written on this inequality providing
some inequalities analogous to Hadamard's inequality
given in (\ref{1.1}) involving two convex functions, see \cite{pach, bak, tun}.
Pachpatte in \cite{pach} has proved the following theorem for the product of two convex functions.
\begin{theorem}\label{t2}
Let $f$ and $g$ be real-valued, nonnegative and convex functions on $[a, b]$. Then
\begin{equation*}
\frac{1}{b-a}\int_{a}^{b}f(x)g(x)dx\leq\frac{1}{3}M(a, b)+\frac{1}{6}N(a, b),
\end{equation*}
\begin{equation*}
2f\left(\frac{a+b}{2}\right)g\left(\frac{a+b}{2}\right)\leq\frac{1}{b-a}\int_{a}^{b}f(x)g(x)dx+\frac{1}{6}M(a, b)+\frac{1}{3}N(a, b),
\end{equation*}
where $M(a, b)=f(a)g(a)+f(b)g(b), N(a, b)=f(a)g(b)+f(b)g(a)$.
\end{theorem}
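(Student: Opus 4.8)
The plan is to exploit the pointwise product structure together with the fact that $f$ and $g$ are \emph{nonnegative}, so that multiplying two inequalities of the form $u\le v$ preserves their direction. For the first inequality I would parametrize the segment by $x=ta+(1-t)b$, $t\in[0,1]$, and use convexity in the form $f(ta+(1-t)b)\le tf(a)+(1-t)f(b)$ and $g(ta+(1-t)b)\le tg(a)+(1-t)g(b)$. Multiplying these two nonnegative bounds and expanding gives
\begin{equation*}
f(ta+(1-t)b)\,g(ta+(1-t)b)\le t^{2}f(a)g(a)+(1-t)^{2}f(b)g(b)+t(1-t)N(a,b).
\end{equation*}
Integrating over $t\in[0,1]$, using $\int_{0}^{1}t^{2}\,dt=\int_{0}^{1}(1-t)^{2}\,dt=\tfrac13$ and $\int_{0}^{1}t(1-t)\,dt=\tfrac16$, and the change of variables turning $\int_{0}^{1}f(ta+(1-t)b)g(ta+(1-t)b)\,dt$ into $\tfrac1{b-a}\int_{a}^{b}f(x)g(x)\,dx$, yields the first claimed inequality immediately.

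For the second inequality the key device is to write the midpoint as the average of two symmetric points, $\frac{a+b}{2}=\tfrac12\bigl(ta+(1-t)b\bigr)+\tfrac12\bigl((1-t)a+tb\bigr)$. Convexity then gives $f\!\left(\tfrac{a+b}{2}\right)\le\tfrac12 f(ta+(1-t)b)+\tfrac12 f((1-t)a+tb)$ and similarly for $g$. Multiplying these two nonnegative estimates and expanding produces four terms: two ``diagonal'' products of the form $f(\cdot)g(\cdot)$ evaluated at the same point, and two ``cross'' products $f(ta+(1-t)b)g((1-t)a+tb)$ and $f((1-t)a+tb)g(ta+(1-t)b)$.

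Next I would integrate over $t\in[0,1]$. Each diagonal term integrates, after the substitution $x=ta+(1-t)b$ (resp. $x=(1-t)a+tb$), to $\tfrac1{b-a}\int_{a}^{b}f(x)g(x)\,dx$, so together with the prefactor $\tfrac14$ they contribute $\tfrac12\cdot\tfrac1{b-a}\int_{a}^{b}fg$. For each cross term I would again bound each factor by its convex combination of endpoint values, e.g. $f(ta+(1-t)b)\le tf(a)+(1-t)f(b)$ and $g((1-t)a+tb)\le(1-t)g(a)+tg(b)$, multiply, and add the two cross contributions; the $f(a)g(a)$ and $f(b)g(b)$ pieces combine into $2t(1-t)M(a,b)$ while the mixed pieces combine into $(t^{2}+(1-t)^{2})N(a,b)$. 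Integrating with $\int_{0}^{1}2t(1-t)\,dt=\tfrac13$ and $\int_{0}^{1}(t^{2}+(1-t)^{2})\,dt=\tfrac23$, and multiplying by $\tfrac14$, gives $\tfrac1{12}M(a,b)+\tfrac16 N(a,b)$ for the cross part. Adding the two parts and multiplying through by $2$ produces exactly the second inequality.

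The computations are entirely routine; the one point that requires care — and the step I would flag as the crux — is the systematic use of the nonnegativity of $f$ and $g$ so that the products of the convexity inequalities keep their direction, together with the symmetric decomposition of the midpoint, which is precisely what makes the integrated cross terms collapse into clean multiples of $M(a,b)$ and $N(a,b)$.
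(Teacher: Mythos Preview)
Your argument is correct. Note, however, that the paper does not actually prove this theorem: it is quoted from Pachpatte \cite{pach} as background. That said, your method is precisely the one the paper employs when proving its operator generalizations (Theorems~\ref{t7} and~\ref{t8}): bound each factor by its $s$-convex combination of endpoint values, multiply using nonnegativity, expand, and integrate the resulting powers of $t$ and $1-t$; and for the lower estimate, decompose the midpoint symmetrically as $\tfrac12\bigl(tA+(1-t)B\bigr)+\tfrac12\bigl((1-t)A+tB\bigr)$, separate diagonal from cross terms, and bound the cross terms again via convexity. Specializing those proofs to $s_1=s_2=1$ and scalar arguments reproduces exactly your computation, so there is no methodological difference to report.
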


Kirmaci et al. in \cite {kirm} have proved the following theorem for the product of two $s$-convex functions, which is a generalization of Theorem \ref{t2}.
\begin{theorem}\label{t3}
Let $f,g:[0,\infty)\to [0,\infty)$ be $s_1$-convex and $s_2$-convex functions in the second sense respectively, where $s_1,s_2\in (0,1)$. Let $a,b\in [0,\infty)$, $a< b$. If $f,g\text{ and }fg\in L^{1}([a,b])$ then
\begin{equation*}
\frac{1}{b-a}\int_{0}^{1}f(x)g(x)dx
\leq\frac{1}{s_1+s_2+1}M(a,b)+\beta(s_1+1,s_2+1)N(a,b).
\end{equation*}
where $M(a,b)=f(a)g(a)+f(b)g(b), N(a,b)=f(a)g(b)+f(b)g(a)$.
\end{theorem}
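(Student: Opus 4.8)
The plan is to reduce the whole statement to a one–variable substitution and then exploit the two $s$-convexity hypotheses separately before combining them. First I would parametrise the interval: for $x\in[a,b]$ write $x=ta+(1-t)b$ with $t=\frac{b-x}{b-a}\in[0,1]$, so that $dx=-(b-a)\,dt$ and the correspondence $x\mapsto t$ is orientation reversing ($x=a$ gives $t=1$, $x=b$ gives $t=0$). Applying the definition of $s_1$-convexity in the second sense to $f$ yields $f(x)\le t^{s_1}f(a)+(1-t)^{s_1}f(b)$, and applying $s_2$-convexity to $g$ yields $g(x)\le t^{s_2}g(a)+(1-t)^{s_2}g(b)$.

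Next, because $f$ and $g$ take only nonnegative values, I may multiply these two inequalities without reversing them, obtaining
\begin{equation*}
f(x)g(x)\le t^{s_1+s_2}f(a)g(a)+t^{s_1}(1-t)^{s_2}f(a)g(b)+(1-t)^{s_1}t^{s_2}f(b)g(a)+(1-t)^{s_1+s_2}f(b)g(b).
\end{equation*}
Integrating in $x$ over $[a,b]$, dividing by $b-a$, and changing variables to $t$ (the orientation reversal cancelling the sign from $dx$), the left side becomes $\frac{1}{b-a}\int_a^b f(x)g(x)\,dx$, while each term on the right produces an elementary integral in $t$ over $[0,1]$. Here $\int_0^1 t^{s_1+s_2}\,dt=\int_0^1(1-t)^{s_1+s_2}\,dt=\frac{1}{s_1+s_2+1}$, and $\int_0^1 t^{s_1}(1-t)^{s_2}\,dt=\int_0^1(1-t)^{s_1}t^{s_2}\,dt=\beta(s_1+1,s_2+1)$ by the symmetry of the Beta function. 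Collecting the coefficients of $f(a)g(a)+f(b)g(b)$ and of $f(a)g(b)+f(b)g(a)$ gives precisely $\frac{1}{s_1+s_2+1}M(a,b)+\beta(s_1+1,s_2+1)N(a,b)$.

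The integrability hypotheses $f,g,fg\in L^1[a,b]$ are exactly what legitimise every step: they guarantee the left-hand integral is finite, and the right-hand bound is a fixed polynomial-in-$t$ combination of the finite constants $f(a),f(b),g(a),g(b)$, hence integrable termwise. I do not anticipate a genuine obstacle; this is the $s$-convex analogue of Pachpatte's argument for \thmref{t2}, and the only points needing care are bookkeeping the orientation-reversing substitution and recognising the mixed term $\int_0^1 t^{s_1}(1-t)^{s_2}\,dt$ as a Beta value rather than attempting a closed-form evaluation.
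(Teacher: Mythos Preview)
Your argument is correct and is exactly the scalar specialisation of the method the paper uses: the paper does not reprove \thmref{t3} itself (it is quoted from \cite{kirm}), but its proof of the operator generalisation \thmref{t7} follows the identical route---apply the two $s$-convexity inequalities, multiply them using nonnegativity, and integrate the resulting expansion over $[0,1]$, recognising the mixed coefficients as Beta values. Your bookkeeping with the substitution $x=ta+(1-t)b$ simply rewrites $\frac{1}{b-a}\int_a^b$ as $\int_0^1$, which is the form the paper works in directly.
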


In this paper we show that Theorem \ref{t1} and Theorem \ref{t3} hold for operator s-convex functions in a convex subset $K$ of $B(H)^+$
the set of positive operators in $B(H)$. We also obtain some integral inequalities for the product of two operator $s$-convex functions.

\section{operator $s$-convex functions}

First, we review the operator order in $B(H)$ and the continuous functional calculus for a bounded selfadjoint operator.
For selfadjoint operators $A, B \in B(H)$ we write $A\leq B ( \text{or}~ B\geq A)$ if $\langle Ax,x\rangle\leq\langle Bx,x\rangle$ for every vector $x\in H$,
we call it the operator order.

Now, let $A$ be a bounded selfadjoint linear operator on a complex Hilbert space $(H; \langle .,.\rangle)$
and $C(Sp(A))$ the $C^*$-algebra of all continuous complex-valued functions on the spectrum of $A$.
The Gelfand map establishes a $*$-isometrically isomorphism $\Phi$ between
$C(Sp(A))$ and the $C^*$-algebra $C^*(A)$ generated by $A$  and the identity operator $1_H$ on $H$ as
follows (see for instance \cite[p.3]{fur}):
For $f, g\in C (Sp (A))$ and $\alpha,\beta\in\mathbb{C}$
\begin{align*}
&(i)~\Phi(\alpha f+\beta g)=\alpha\Phi(f)+\beta\Phi(g);\\
&(ii)~\Phi(fg)=\Phi(f)\Phi(g)~\text{and}~\Phi(f^*)=\Phi(f)^*;\\
&(iii)~\|\Phi(f)\|=\|f\|:=\sup_{t\in Sp(A)} |f(t)|;\\
&(iv)~\Phi(f_0)=1 ~\text{and}~ \Phi(f_1)=A,~\text{ where } f_0(t)=1~ \text{and } f_1(t)=t, \text{for}~\\
&t\in Sp(A).
\end{align*}
If $f$ is a continuous complex-valued functions on $Sp(A)$, the element $\Phi(f)$ of $C^*(A)$ is denoted by
$f(A)$, and we call it the continuous functional calculus for a bounded selfadjoint operator $A$.

If $A$ is a bounded selfadjoint operator and $f$ is a real-valued continuous function on $Sp (A)$,
then $f (t) \geq 0$ for any $t \in Sp (A)$ implies that $f (A) \geq 0$, i.e., $f (A)$ is a positive
operator on $H$. Moreover, if both $f$ and $g$ are real-valued functions on $Sp (A)$ such that 
$f(t) \leq g (t)$ for any $t\in sp(A)$, then $ f(A)\leq f(B)$ in the operator order in $B(H)$.

A real valued continuous function $f$ on an interval $I$ is said to be operator convex
(operator concave) if
\begin{equation*}
f((1 -\lambda)A +\lambda B)\leq (\geq) (1-\lambda) f (A) + \lambda f (B)
\end{equation*}
in the operator order in $B(H)$, for all $ \lambda\in [0, 1] $ and for every bounded self-adjoint operators $A$ and $B$
in $B(H)$ whose spectra are contained in $I$.

As examples of such functions, we give the following examples, another proof of them and further examples can be found in \cite{fur}.
\begin{example}
\begin{enumerate}
\item[(i)] The convex function $ f(t)=\alpha t^{2}+\beta t+\gamma ~~(\alpha\geq0, \beta,\gamma\in \mathbb{R})$ is operator convex on every interval. To see it, for all self-adjoint
operators $A$ and $B$:
\begin{multline*}
\frac{f(A)+f(B)}{2}-f\left(\frac{A+B}{2}\right)=\alpha\left(\frac{A^2+B^2}{2}-\left(\frac{A+B}{2}\right)^2\right)\\
+\beta\left(\frac{A+B}{2}-\frac{A+B}{2}\right)+(\gamma-\gamma)\\
=\frac{\alpha}{4}(A^2+B^2-AB-BA)=\frac{\alpha}{4}(A-B)^2\geq0.
\end{multline*}
\item[(ii)] The convex function $ f(t)=t^{3} $ on $ [0,\infty) $ is not operator convex. In fact, if we put
\begin{equation*}
A=\left[ \begin{array}{cc}
3&-1\\
-1&1
\end{array}\right] \qquad\&\qquad
B=\left[ \begin{array}{cc}
1&0\\
0&0
\end{array}\right],
\end{equation*}
then
\begin{equation*}
\frac{A^3+B^3}{2}-\left(\frac{A+B}{2}\right)^3=\frac{1}{8}\left[ \begin{array}{cc}
67&-34\\
-34&17
\end{array}\right] \ngeq 0.
\end{equation*}
\end{enumerate}
\end{example}
For some fundamental results on operator convex (operator concave) and operator monotone functions, see \cite{fur} and
the references therein.

We denoted by $B(H)^+$ the set of all positive operators in $B(H)$ and
\[
C(H):=\{ A\in B(H)^+:~AB+BA\geq0, ~\text{ for all } B\in B(H)^+\}.
\]
It is obvious that $C(H)$ is a closed convex cone in $B(H)$.

\begin{definition}
Let $I$ be an interval in $[0,\infty)$ and $K$ be a convex subset of $B(H)^+$. A continuous function $f:I\rightarrow \mathbb{R}$ is said to be operator s-convex on $I$ for operators in $K$ if
\begin{align*}
f ((1 -\lambda)A +\lambda B)\leq (1-\lambda)^s f (A) + \lambda^s f (B)
\end{align*}
in the operator order in $B(H)$, for all $ \lambda\in [0, 1] $ and for every positive operators $A$ and $B$
in $K$ whose spectra are contained in $I$ and for some fixed $s\in (0,1]$.
For $K=B(H)^+$ we say $f$ is operator s-convex on $I$.
\end{definition}
First of all we state the following lemma.
\begin{lemma}
If $f$ is operator $s$-convex on $[0,\infty)$ for operators in $K$, then $f(A)$ is positive for every $A\in K$.
\end{lemma}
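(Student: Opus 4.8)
The plan is to transplant the classical Hudzik--Maligranda argument (the one showing that every $f \in K_s^2$ with $s \in (0,1)$ is nonnegative, as recalled from \cite{hud}) to the operator setting. The crucial trick is that the defining inequality for operator $s$-convexity may be applied to the \emph{degenerate} convex combination in which the two operators coincide, which turns the two-variable inequality into a one-variable comparison of $f(A)$ with a positive scalar multiple of itself.

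Concretely, I would proceed as follows. Fix $A \in K$. Since $A \in B(H)^+$, its spectrum lies in $[0,\infty)$, i.e. in the interval $I$, so the hypothesis ``spectra contained in $I$'' is automatic and $f(A)$ is a well-defined selfadjoint operator through the continuous functional calculus described above. Apply the definition of operator $s$-convexity with $B := A$ and an arbitrary $\lambda \in (0,1)$; since $(1-\lambda)A + \lambda A = A$, this yields
\[
f(A) = f\big((1-\lambda)A + \lambda A\big) \leq (1-\lambda)^s f(A) + \lambda^s f(A) = \big((1-\lambda)^s + \lambda^s\big) f(A)
\]
in the operator order. Rearranging, $\big((1-\lambda)^s + \lambda^s - 1\big) f(A) \geq 0$ in $B(H)$. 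For $s \in (0,1)$ and $\lambda \in (0,1)$ one has $\lambda^s > \lambda$ and $(1-\lambda)^s > 1-\lambda$ (the map $t \mapsto t^s$ strictly dominates the identity on $(0,1)$), so the scalar $c := (1-\lambda)^s + \lambda^s - 1$ is strictly positive. Multiplying the inequality $c\, f(A) \geq 0$ by $c^{-1} > 0$, which preserves the operator order, gives $f(A) \geq 0$, i.e. $f(A) \in B(H)^+$. As $A \in K$ was arbitrary, the lemma follows.

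I expect the only genuinely delicate point to be the strict positivity of $c$, and this is really a constraint rather than a difficulty: it forces $s < 1$, since for $s = 1$ the coefficient collapses to $0$ and the statement is false (for instance the affine, hence operator convex, function $f(t) = t - 1$ satisfies $f(\tfrac12 \cdot 1_H) = -\tfrac12 \cdot 1_H \not\geq 0$); thus the lemma is to be read for $s \in (0,1)$, exactly as in the scalar situation of \cite{hud}. Everything else is routine: the functional calculus is defined on the positive operators in question, order is respected by positive scalar multiplication, and no property of $K$ beyond $A \in K$ is needed.
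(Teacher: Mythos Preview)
Your proof is correct and follows the same approach as the paper: apply the defining inequality with $B = A$ to obtain $\big((1-\lambda)^s + \lambda^s - 1\big) f(A) \geq 0$ with a strictly positive scalar, whence $f(A) \geq 0$. The paper simply fixes $\lambda = \tfrac12$, giving the coefficient $2^{1-s}-1$; your remark that $s<1$ is required (with the counterexample for $s=1$) is a useful clarification that the paper leaves implicit.
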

\begin{proof}
For $A\in K$, we have
\[
f(A)=f\left(\frac{A }{2}+\frac{A}{2}\right)\leq \left(\frac{1}{2}\right)^s f (A) + \left(\frac{1}{2}\right)^s f (A)= 2^{1-s}f(A).
\]
This implies that $(2^{1-s}-1)f(A)\geq0$ and so $f(A)\geq0$.
\end{proof}
In \cite{mos}, Moslehian and Najafi proved the following theorem for positive operators as follows:
\begin{theorem}\label{t4}
Let $A,B\in B(H )^+$. Then $AB + BA$ is positive if and only if
$f(A + B)\leq f(A) + f(B)$ for all non-negative operator monotone functions $f$ on $[0,\infty)$.
\end{theorem}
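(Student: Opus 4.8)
The plan is to prove the two implications of \thmref{t4} separately. The direction ``subadditivity $\Rightarrow AB+BA\ge 0$'' is a short perturbation argument; the converse ``$AB+BA\ge 0\Rightarrow$ subadditivity'' rests on L\"owner's integral representation together with one key operator inequality, which I expect to be the crux.

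For the direction $\Leftarrow$, assume $f(A+B)\le f(A)+f(B)$ for every non-negative operator monotone $f$ on $[0,\infty)$, and test this against $f_n(t)=\dfrac{nt}{n+t}=n-n^{2}(n+t)^{-1}$, $n\in\mathbb{N}$. Each $f_n$ is non-negative on $[0,\infty)$ and operator monotone, being $n$ times the composition of the operator monotone function $s\mapsto s/(1+s)$ with $t\mapsto t/n$. After rearranging, $f_n(A+B)\le f_n(A)+f_n(B)$ reads
\[
\Bigl(1+\tfrac{A}{n}\Bigr)^{-1}+\Bigl(1+\tfrac{B}{n}\Bigr)^{-1}-\Bigl(1+\tfrac{A+B}{n}\Bigr)^{-1}\le 1 .
\]
Expanding each resolvent in its (for $n$ large, norm-convergent) Neumann series in $1/n$, the $1/n$ terms cancel, and tested against a fixed vector $x$ the inequality becomes $\langle(A^{2}+B^{2}-(A+B)^{2})x,x\rangle\le O(1/n)$; letting $n\to\infty$ gives $(A+B)^{2}\ge A^{2}+B^{2}$, i.e.\ $AB+BA\ge 0$.

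For the direction $\Rightarrow$, assume $AB+BA\ge 0$, equivalently $(A+B)^{2}\ge A^{2}+B^{2}$. By L\"owner's theorem (see \cite{fur}) every non-negative operator monotone $f$ on $[0,\infty)$ has the form $f(t)=\alpha+\beta t+\int_{(0,\infty)}\frac{t}{1+\lambda t}\,d\nu(\lambda)$ for some $\alpha=f(0)\ge 0$, $\beta\ge 0$ and a positive measure $\nu$. The relation $f(A+B)\le f(A)+f(B)$ is additive in $f$ and stable under positive scaling and weak-operator limits, hence under integration against $\nu$; it holds trivially for the constant ($\alpha\le 2\alpha$) and for $f(t)=t$ (with equality). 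So it suffices to prove it for each $g_\lambda(t)=\dfrac{t}{1+\lambda t}$, $\lambda>0$, and the substitution $A\mapsto\lambda A$, $B\mapsto\lambda B$ (which preserves $AB+BA\ge 0$) reduces this to the single inequality
\[
\frac{A}{1+A}+\frac{B}{1+B}\ \ge\ \frac{A+B}{1+A+B}.
\]

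This last display is the main obstacle. Using $\dfrac{C}{1+C}=C-C(1+C)^{-1}C$, it is equivalent to the operator superadditivity $\phi(A+B)\ge\phi(A)+\phi(B)$ of the operator convex function $\phi(t)=t^{2}/(1+t)$; for the prototype $\phi(t)=t^{2}$ this superadditivity is \emph{exactly} $AB+BA\ge 0$, which is why one expects the general case to go through. To prove it I would combine $(A+B)^{2}\ge A^{2}+B^{2}$ with the resolvent identity
\[
(1+A)^{-1}-(1+A+B)^{-1}=(1+A)^{-1/2}\,\frac{\widetilde B}{1+\widetilde B}\,(1+A)^{-1/2},\qquad \widetilde B:=(1+A)^{-1/2}B(1+A)^{-1/2},
\]
reducing the claim to a positivity statement about $B$ and $\widetilde B$ driven by the hypothesis; the delicate point is not to waste too much in passing from $\widetilde B$ back to $B$, since the crude bound $\widetilde B/(1+\widetilde B)\le\widetilde B$ is insufficient. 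An alternative worth attempting is to show directly that $1-e^{-sA}-e^{-sB}+e^{-s(A+B)}\ge 0$ for all $s\ge 0$ (integrating this against $e^{-s}\,ds$ yields the displayed inequality), noting that this operator function and its first derivative vanish at $s=0$ while its second derivative there equals $AB+BA$. Making either route close is the principal difficulty of the proof.
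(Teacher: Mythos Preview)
The paper does not contain a proof of this theorem at all: it is quoted verbatim from Moslehian and Najafi \cite{mos} and then applied to produce Example~\ref{e3}. So there is no ``paper's own proof'' against which to compare your attempt.

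Judging your proposal on its own merits: the $\Leftarrow$ direction is correct and clean --- testing against $f_n(t)=nt/(n+t)$ and reading off the $1/n^{2}$ coefficient in the Neumann expansion gives exactly $AB+BA\ge 0$. The $\Rightarrow$ direction has the right architecture (L\"owner integral representation, reduction to the single kernel $g_\lambda(t)=t/(1+\lambda t)$, rescaling to $\lambda=1$), and this is indeed how the argument in \cite{mos} is organised. However, you have not actually proved the key inequality
\[
\frac{A}{1+A}+\frac{B}{1+B}\ \ge\ \frac{A+B}{1+A+B}
\]
under the hypothesis $AB+BA\ge 0$; you only offer two heuristic routes and explicitly call closing them ``the principal difficulty of the proof.'' As written, then, this is a plan rather than a proof.

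One concrete warning: the semigroup route is almost certainly a dead end. Knowing that $h(s)=1-e^{-sA}-e^{-sB}+e^{-s(A+B)}$ satisfies $h(0)=h'(0)=0$ and $h''(0)=AB+BA\ge 0$ says nothing about $h''(s)$ for $s>0$, which involves the noncommuting triples $A^{2}e^{-sA}$, $B^{2}e^{-sB}$, $(A+B)^{2}e^{-s(A+B)}$ and has no reason to remain positive; second-order Taylor information at $0$ does not control an operator-valued function globally. The resolvent approach is the viable one, but you still owe the actual estimate --- for that, consult \cite{mos} directly.
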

As an example of operator $s$-convex function, we give the following example.
\begin{example}\label{e3}
Since for every positive operators $A,B\in C(H), ~AB+BA\geq0$, utilizing  Theorem \ref{t4} we get
\begin{equation*}
((1-t)A+tB))^s\leq(1-t)^sA^s+t^sB^s.
\end{equation*}
Therefore the continuous function $ f(t)=t^s ~(0<s\leq 1)$ is operator $s$-convex on $ [0,\infty) $ for operators in $C(H)$.
\end{example}

Dragomir in \cite{dra2} has proved a Hermite-Hadamard type inequality for operator convex function as follows:

\begin{theorem}\label{t5}
Let $f : I \rightarrow \mathbb{R}$ be an operator convex function on the interval $I$. Then
for all selfadjoint operators $A$ and $B$ with spectra in $I$ we have the inequality
\begin{multline*}
\left(f\left(\frac{A+B}{2}\right)\leq\right)\frac{1}{2}\left[f\left(\frac{3A+B}{4}\right)+f\left(\frac{A+3B}{4}\right)\right]\\
\leq\int_0^1 f((1-t)A+tB))dt\\
\leq\frac{1}{2}\left[f\left(\frac{A+B}{2}\right)+\frac{f(A)+f(B)}{2}\right]\left(\leq\frac{f(A)+f(B)}{2}\right).
\end{multline*}
\end{theorem}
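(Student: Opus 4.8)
The plan is to carry over, essentially line by line, the classical proof of the Hermite--Hadamard inequality \eqref{1.1} and of its usual refinement, noting that the scalar argument uses convexity only in the midpoint form $f(\tfrac{X+Y}{2})\le\tfrac12 f(X)+\tfrac12 f(Y)$ together with the linearity and the order-preserving property of integration. In the operator setting the map $t\mapsto f((1-t)A+tB)$ is norm-continuous on $[0,1]$ (because $t\mapsto(1-t)A+tB$ is a continuous path of selfadjoint operators with spectra in a fixed compact interval containing $\mathrm{Sp}(A)\cup\mathrm{Sp}(B)$, on which the functional calculus $T\mapsto f(T)$ is norm-continuous), so the $B(H)$-valued Riemann integral $\int_0^1 f((1-t)A+tB)\,dt$ exists; and if $G,H$ are continuous $B(H)$-valued functions with $G(t)\le H(t)$ for every $t$, then $\int_0^1 G\le\int_0^1 H$, since each Riemann sum respects the operator order and that order is closed under norm limits. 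One also uses repeatedly that any convex combination of selfadjoint operators with spectra in the interval $I$ again has spectrum in $I$, so $f$ can be applied to all operators below.

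For the two upper estimates, split $\int_0^1=\int_0^{1/2}+\int_{1/2}^1$. On $[0,\tfrac12]$ the identity $(1-t)A+tB=(1-2t)A+2t\cdot\frac{A+B}{2}$ and operator convexity give $f((1-t)A+tB)\le(1-2t)f(A)+2t\,f(\frac{A+B}{2})$; integrating and using $\int_0^{1/2}(1-2t)\,dt=\int_0^{1/2}2t\,dt=\tfrac14$ yields $\int_0^{1/2}f((1-t)A+tB)\,dt\le\tfrac14 f(A)+\tfrac14 f(\frac{A+B}{2})$. The mirror identity $(1-t)A+tB=(2-2t)\frac{A+B}{2}+(2t-1)B$ on $[\tfrac12,1]$ gives $\int_{1/2}^1 f((1-t)A+tB)\,dt\le\tfrac14 f(B)+\tfrac14 f(\frac{A+B}{2})$. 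Adding produces $\int_0^1 f((1-t)A+tB)\,dt\le\tfrac12\big[f(\frac{A+B}{2})+\tfrac{f(A)+f(B)}{2}\big]$, and the final parenthesized inequality is the case $\lambda=\tfrac12$ of operator convexity.

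For the two lower estimates, observe that for $t\in[0,\tfrac12]$ the operator $\frac{3A+B}{4}$ is the midpoint of $(1-t)A+tB$ and $(\tfrac12+t)A+(\tfrac12-t)B$, so operator convexity at $\lambda=\tfrac12$ gives $f(\frac{3A+B}{4})\le\tfrac12 f((1-t)A+tB)+\tfrac12 f((\tfrac12+t)A+(\tfrac12-t)B)$. Integrating over $[0,\tfrac12]$ and applying the substitution $u=\tfrac12-t$, under which $\int_0^{1/2}f((\tfrac12+t)A+(\tfrac12-t)B)\,dt=\int_0^{1/2}f((1-u)A+uB)\,du$, one gets $\tfrac12 f(\frac{3A+B}{4})\le\int_0^{1/2}f((1-t)A+tB)\,dt$. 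Symmetrically, with $\frac{A+3B}{4}$ the midpoint of $(1-t)A+tB$ and $(t-\tfrac12)A+(\tfrac32-t)B$ and the substitution $u=\tfrac32-t$, one obtains $\tfrac12 f(\frac{A+3B}{4})\le\int_{1/2}^1 f((1-t)A+tB)\,dt$. Adding gives $\tfrac12\big[f(\frac{3A+B}{4})+f(\frac{A+3B}{4})\big]\le\int_0^1 f((1-t)A+tB)\,dt$, while the first parenthesized inequality follows from $\frac{A+B}{2}=\tfrac12\cdot\frac{3A+B}{4}+\tfrac12\cdot\frac{A+3B}{4}$ and operator convexity at $\lambda=\tfrac12$.

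I do not expect a genuine obstacle here: the whole proof is a faithful operator-valued transcription of the scalar one. The only points needing a word of care are the existence and order-monotonicity of the $B(H)$-valued Riemann integral and the remark that convex combinations of operators with spectra in $I$ stay inside the domain of the functional calculus; the algebraic midpoint identities and the reflection substitutions are purely routine.
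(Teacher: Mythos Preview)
Your argument is correct: the midpoint identities and substitutions check out, the operator-valued integral exists and is order-preserving for the reasons you give, and each of the four inequalities follows as you claim. Note, however, that the paper does not actually supply a proof of this theorem; it is quoted from Dragomir \cite{dra2} as a known result, so there is no in-paper proof to compare against. Your approach---splitting the integral at $t=\tfrac12$ and applying operator convexity on each half via the decompositions $(1-t)A+tB=(1-2t)A+2t\cdot\tfrac{A+B}{2}$ and its mirror---is the standard route and is essentially the one Dragomir uses in the cited reference.
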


Let $X$ be a vector space, $ x, y \in X$, $ x\neq y$. Define the segment
\[[x, y] :=(1- t)x + ty; t \in [0, 1].\]
We consider the function $f : [x, y]\rightarrow \mathbb{R}$ and the associated function
\begin{align*}
&g(x, y) : [0, 1] \rightarrow \mathbb{R},\\
&g(x, y)(t) := f((1 - t)x + ty), t \in [0, 1].
\end{align*}
Note that $f$ is convex on $[x, y]$ if and only if $g(x, y)$ is convex on $[0, 1]$.
For any convex function defined on a segment $[x, y] \in X$, we have the Hermite-Hadamard integral inequality

\begin{equation*}
f\left(\frac{x+y}{2}\right)\leq \int_0^1 f((1-t)x+ty)dt\leq \frac{f(x)+f(y)}{2},
\end{equation*}

which can be derived from the classical Hermite-Hadamard inequality (\ref{1.1}) for the
convex function $g(x, y) : [0,1] \rightarrow \mathbb{R}$.

\begin{lemma}
Let $f:I\subseteq[0,\infty)\rightarrow \Bbb R$ be a continuous function on the interval $I$. Then for every
two positive operators $A,B\in K\subseteq B(H)^+$ with spectra in $I$ the function $f$ is operator s-convex for operators in $[A,B]:=\{(1-t)A+tB : t\in [0,1]\}$
if and only if the function $\varphi_{x,A,B}:[0,1]\rightarrow \Bbb R$ defined by
\begin{equation*}
\varphi_{x,A,B}=\langle f((1-t)A+tB) x,x \rangle
\end{equation*}
is $s$-convex on $[0,1]$ for every $x\in H$ with $\|x\|=1$.
\end{lemma}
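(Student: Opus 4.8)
The statement is an equivalence, and my plan is to reduce both directions to a single bookkeeping observation: if $C=(1-t)A+tB$ and $D=(1-u)A+uB$ lie on the segment $[A,B]$ and $\lambda\in[0,1]$, then
$(1-\lambda)C+\lambda D=(1-w)A+wB$ with $w=(1-\lambda)t+\lambda u\in[0,1]$, so convex combinations of points of $[A,B]$ stay on $[A,B]$ and the segment parameter transforms linearly (this is the operator counterpart of the scalar reparametrization $g(x,y)(t)=f((1-t)x+ty)$ recalled just above the statement). First I would record that every operator on $[A,B]$ has spectrum in $I$: writing $I$ as an interval with endpoints $c<d$ (degenerate cases being trivial), from $c\,1_H\le A\le d\,1_H$ and $c\,1_H\le B\le d\,1_H$ we get $c\,1_H\le(1-t)A+tB\le d\,1_H$, hence $Sp((1-t)A+tB)\subseteq I$, so that $f((1-t)A+tB)$, and therefore the scalar function $\varphi_{x,A,B}(t)$, is well defined for every $t\in[0,1]$.

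For the direct implication I would assume $f$ is operator $s$-convex for operators in $[A,B]$. Given $t,u\in[0,1]$ and $\lambda\in[0,1]$, I apply the defining inequality to $C=(1-t)A+tB$ and $D=(1-u)A+uB$ to get $f((1-\lambda)C+\lambda D)\le(1-\lambda)^sf(C)+\lambda^sf(D)$ in the operator order; compressing with a unit vector $x$ and using the first paragraph to identify $(1-\lambda)C+\lambda D=(1-w)A+wB$ yields $\varphi_{x,A,B}((1-\lambda)t+\lambda u)\le(1-\lambda)^s\varphi_{x,A,B}(t)+\lambda^s\varphi_{x,A,B}(u)$, which is exactly $s$-convexity of $\varphi_{x,A,B}$ on $[0,1]$ (for all $t,u,\lambda$, since every pair of points of $[0,1]$ arises as such a pair of parameters).

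For the converse I would assume $\varphi_{x,A,B}$ is $s$-convex on $[0,1]$ for every unit vector $x$. Let $C=(1-t)A+tB$ and $D=(1-u)A+uB$ be arbitrary operators in $[A,B]$, let $\lambda\in[0,1]$ and $w=(1-\lambda)t+\lambda u$. For a unit vector $x$, $s$-convexity of $\varphi_{x,A,B}$ gives $\langle f((1-\lambda)C+\lambda D)x,x\rangle=\varphi_{x,A,B}(w)\le(1-\lambda)^s\varphi_{x,A,B}(t)+\lambda^s\varphi_{x,A,B}(u)=\langle[(1-\lambda)^sf(C)+\lambda^sf(D)]x,x\rangle$; for an arbitrary nonzero $x$ I apply this to $x/\|x\|$ and multiply through by $\|x\|^2$ (the case $x=0$ being trivial). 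Since a selfadjoint operator inequality is exactly the assertion $\langle\cdot\,x,x\rangle\le\langle\cdot\,x,x\rangle$ for all $x\in H$, this gives $f((1-\lambda)C+\lambda D)\le(1-\lambda)^sf(C)+\lambda^sf(D)$ in $B(H)$ for all $\lambda\in[0,1]$, i.e. $f$ is operator $s$-convex for operators in $[A,B]$.

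I do not expect a genuine obstacle: the entire content is the reparametrization $w=(1-\lambda)t+\lambda u$ together with the elementary facts that $\langle\cdot\,x,x\rangle$ is order-preserving and that positivity of a selfadjoint operator is detected on unit vectors. The only point deserving a line of care is checking that all functional-calculus expressions make sense, i.e. the spectrum-containment step, which uses that $I$ is an interval (hence convex) and the monotonicity of the functional calculus noted in the preliminaries.
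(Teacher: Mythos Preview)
Your proof is correct and follows essentially the same approach as the paper: both directions rest on the affine reparametrization $(1-\lambda)C+\lambda D=(1-w)A+wB$ with $w=(1-\lambda)t+\lambda u$, then apply the defining inequality and pair with a unit vector. You add a short justification that $Sp((1-t)A+tB)\subseteq I$ and an explicit remark on passing from unit vectors to the operator order, points the paper leaves implicit, but the core argument is identical.
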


\begin{proof}
Let $f$ be operator $s$-convex for operators in $[A,B]$ then for any $t_{1} , t_{2} \in [0,1] $ and $\alpha , \beta \geq 0 $
with $\alpha + \beta = 1$ we have
\begin{multline*}
\varphi_{x,A,B} ( \alpha t_{1} + \beta t_{2} ) = \langle f((1-(\alpha t_{1} +\beta t_{2} ))A +(\alpha t_{1} +\beta t_{2} )B)x,x \rangle\\
=\langle f(\alpha [(1-t_{1})A+t_{1} B]+\beta [(1-t_{2} )A+t_{2} B])x,x \rangle\\
\leq \alpha^s \langle f((1-t_{1} )A+t_{1} B)x,x\rangle + \beta^s \langle f((1-t_{2})A +t_{2} B)x,x\rangle \\
= \alpha^s \varphi_{x,A,B} (t_{1} )+\beta^s \varphi_{x,A,B}(t_2).
\end{multline*}
showing that $\varphi_{x,A,B}$ is a $s$-convex function on $[0, 1]$.

Let now $\varphi_{x,A,B}$ be $s$-convex on $[0,1]$, we show that $f$ is operator $s$-convex for operators in $[A,B]$.
For every  $C=(1-t_{1})A+t_{1}B$ and $D=(1-t_{2})A+t_{2}B$ in $[A,B]$ we have
\begin{multline*}
\langle f(\lambda C+(1-\lambda)D)x,x\rangle\\
=\langle f[\lambda((1-t_{1})A+t_{1}B)+(1-\lambda)((1-t_{2})A+t_{2}B)]x,x\rangle\\
=\langle f[(1-(\lambda t_{1}+(1-\lambda)t_{2}))A+(\lambda t_{1}+(1-\lambda)t_{2})B]x,x\rangle\\
=\varphi_{x,A,B}(\lambda t_{1}+(1-\lambda)t_{2})\\
\leq\lambda^s \varphi_{x,A,B}(t_{1})+(1-\lambda)^s\varphi_{x,A,B}(t_{2})\\
=\lambda^s \langle f((1-t_{1})A+t_{1}B)x,x\rangle+(1-\lambda)^s\langle f((1-t_{2})A+t_{2}B)x,x\rangle\\
\leq\lambda^s \langle f(C)x,x\rangle+(1-\lambda)^s\langle f(D)x,x\rangle.
\end{multline*}
\end{proof}
The following theorem is a generalization of Theorem \ref{t1} for operator $s$-convex functions.

\begin{theorem}\label{t6}
Let $f : I \rightarrow \mathbb{R}$ be an operator $s$-convex function on the interval $I\subseteq[0,\infty)$ for operators in $K\subseteq B(H)^+$. Then
for all positive operators $A$ and $B$ in $K$ with spectra in $I$ we have the inequality
\begin{equation}\label{2.1}
2^{s-1}f\left(\frac{A+B}{2}\right)\leq\int_0^1 f((1-t)A+tB)dt\leq\frac{f(A)+f(B)}{s+1}.
\end{equation}
\end{theorem}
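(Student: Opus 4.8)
The plan is to reduce the operator inequality (\ref{2.1}) to the scalar Dragomir--Fitzpatrick inequality of Theorem \ref{t1} by testing against unit vectors. First I would fix $x\in H$ with $\|x\|=1$ and set
\[
\varphi(t):=\varphi_{x,A,B}(t)=\langle f((1-t)A+tB)x,x\rangle,\qquad t\in[0,1].
\]
Since $[A,B]\subseteq K$ and the spectrum of each $(1-t)A+tB$ lies in $I$ — for selfadjoint operators $\min Sp((1-t)A+tB)\geq(1-t)\min Sp(A)+t\min Sp(B)$ and dually for the maxima, and $I$ is an interval — the hypothesis makes $f$ operator $s$-convex for operators in $[A,B]$, so by the characterization Lemma just proved, $\varphi$ is $s$-convex in the second sense on $[0,1]$. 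It is also continuous, because $t\mapsto f((1-t)A+tB)$ is norm continuous by the properties of the continuous functional calculus; hence $\varphi\in L^1[0,1]$, and for $s\in(0,1)$ it is automatically nonnegative by the Hudzik--Maligranda fact recalled in the introduction (any member of $K^2_s$ with $s\in(0,1)$ maps into $[0,\infty)$).

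Next I would apply Theorem \ref{t1} to the $s$-convex function $\varphi$ on the interval $[0,1]$ (the argument of \cite{dra1} does not depend on the particular endpoints; the borderline case $s=1$ is simply the classical Hermite--Hadamard inequality for the convex function $\varphi$, as in the discussion preceding this theorem). This yields
\[
2^{s-1}\varphi\!\left(\tfrac12\right)\leq\int_0^1\varphi(t)\,dt\leq\frac{\varphi(0)+\varphi(1)}{s+1}.
\]
Reading the terms back through the functional calculus gives $\varphi(1/2)=\langle f(\tfrac{A+B}{2})x,x\rangle$, $\varphi(0)=\langle f(A)x,x\rangle$, $\varphi(1)=\langle f(B)x,x\rangle$, while, because the $B(H)$-valued Riemann integral of the norm-continuous map $t\mapsto f((1-t)A+tB)$ commutes with the bounded functional $\langle\,\cdot\,x,x\rangle$, one has $\int_0^1\varphi(t)\,dt=\big\langle\big(\int_0^1 f((1-t)A+tB)\,dt\big)x,x\big\rangle$. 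Hence
\[
\Big\langle 2^{s-1}f\!\left(\tfrac{A+B}{2}\right)x,x\Big\rangle\leq\Big\langle\Big(\int_0^1 f((1-t)A+tB)\,dt\Big)x,x\Big\rangle\leq\Big\langle\tfrac{f(A)+f(B)}{s+1}x,x\Big\rangle
\]
for every unit vector $x$, hence, by homogeneity in $x$, for every $x\in H$; this is precisely (\ref{2.1}).

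The only delicate points are the two ``reduction'' clauses used above: that the spectra of the segment operators $(1-t)A+tB$ stay inside $I$ (so $f((1-t)A+tB)$ and the integral are defined) and that Theorem \ref{t1} may be invoked on $[0,1]$ and in the limit $s=1$; neither is a real obstacle. I do not expect any genuinely operator-theoretic difficulty — once the characterization Lemma is available, the full content of the theorem is carried by the scalar inequality applied slice by slice, exactly as in the analogous operator Hermite--Hadamard results. If one wished to avoid quoting Theorem \ref{t1} on a nonstandard interval, the two one-sided estimates for $\varphi$ can instead be obtained directly: the left one from $\varphi(1/2)=\varphi(\tfrac12 t+\tfrac12(1-t))\leq 2^{-s}\big(\varphi(t)+\varphi(1-t)\big)$ followed by integration in $t$, and the right one from $\varphi(t)=\varphi((1-t)\cdot 0+t\cdot 1)\leq(1-t)^s\varphi(0)+t^s\varphi(1)$ together with $\int_0^1 t^s\,dt=\int_0^1(1-t)^s\,dt=\tfrac1{s+1}$.
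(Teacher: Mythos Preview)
Your argument is correct, but it is organized differently from the paper's proof. You first reduce to scalars by fixing a unit vector $x$, invoke the characterization Lemma to see that $\varphi_{x,A,B}$ is $s$-convex on $[0,1]$, and then import the scalar Dragomir--Fitzpatrick inequality (Theorem~\ref{t1}) wholesale. The paper instead stays at the operator level throughout: it writes down the operator inequality $f((1-t)A+tB)\le(1-t)^sf(A)+t^sf(B)$ coming straight from the definition and integrates it for the right-hand estimate, and for the left-hand estimate uses the midpoint identity $\tfrac{A+B}{2}=\tfrac12\big(tA+(1-t)B\big)+\tfrac12\big((1-t)A+tB\big)$ to obtain the operator inequality $f(\tfrac{A+B}{2})\le 2^{-s}\big(f(tA+(1-t)B)+f((1-t)A+tB)\big)$ and integrates that. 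In other words, the paper's argument is exactly the ``direct'' alternative you sketch in your final paragraph, carried out in $B(H)$ rather than after pairing with $x$. Your route buys a clean black-box reduction to the known scalar result (and makes explicit that Theorem~\ref{t6} really is Theorem~\ref{t1} read in each coordinate), at the mild cost of checking that Theorem~\ref{t1} applies to $\varphi$ on $[0,1]$; the paper's route avoids any such quotation and is entirely self-contained.
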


\begin{proof}
For $x\in H$ with $\|x\|=1$ and $t\in [0,1]$, we have
\begin{align}\label{2.2}
\left\langle \big((1-t)A+tB\big)x,x\right\rangle = (1-t)\langle Ax,x\rangle+t\langle Bx,x\rangle\in I,
\end{align}
since $\langle Ax,x\rangle\in Sp(A)\subseteq I$ and $\langle Bx,x\rangle\in Sp(B)\subseteq I$.

Continuity of $f$ and (\ref{2.2}) imply that the operator-valued integral $\int_0^1 f((1-t)A+tB)dt$ exists.

Since $f$ is operator $s$-convex, therefore for $t$ in $[0,1]$ and $A,B\in K$ we have
\begin{equation}\label{2.3}
f((1-t)A+tB)\leq (1-t)^sf(A)+t^sf(B).
\end{equation}

Integrating both sides of (\ref{2.3}) over $[0,1]$ we get the following inequality
\begin{equation*}
\int_0^1 f((1-t)A+tB)dt\leq\frac{f(A)+f(B)}{s+1}.
\end{equation*}
To prove the first inequality in (\ref{2.1}) we observe that
\begin{align}\label{2.4}
f\left(\frac{A+B}{2}\right)&\leq \frac{f(tA+(1-t)B)+f((1-t)A+tB)}{2^s}.
\end{align}
Integrating the inequality (\ref{2.4}) over $t\in [0, 1]$ and taking into account
that
\begin{equation*}
\int_0^1f(tA+(1-t)B)dt=\int_0^1f((1-t)A+tB)dt
\end{equation*}
then we deduce the first part of (\ref{2.1}).
\end{proof}

Let $f: I \rightarrow \mathbb{R}$ be operator $s_1$-convex and $g: I \rightarrow \mathbb{R}$ operator $s_2$-convex function on the interval $I$. Then
for all positive operators $A$ and $B$ on a Hilbert space $H$ with spectra in $I$, we define real functions
$M(A,B)$ and $N(A,B)$ on $H$ by
\begin{align*}
M(A,B)(x)&=\langle f(A)x,x\rangle\langle g(A)x,x\rangle+\langle f(B)x,x\rangle\langle g(B)x,x\rangle\quad &&(x\in H),\\
N(A,B)(x)&=\langle f(A)x,x\rangle\langle g(B)x,x\rangle+\langle f(B)x,x\rangle\langle g(A)x,x\rangle\quad &&(x\in H).
\end{align*}

We note that, the Beta and Gamma functions are defined respectively, as
follows:
\[
\beta(x,y)=\int_0^1 t^{x-1}(1-t)^{y-1} dt \quad x>0,~y>0
\]
and
\[
\Gamma(x)=\int_0^\infty e^{-t}t^{x-1}dt\quad x>0.
\]
The following theorem is a generalization of Theorem 3 for operator $s$-convex functions.

\begin{theorem}\label{t7}
Let $f: I \rightarrow \mathbb{R}$ be operator $s_1$-convex and $g: I \rightarrow \mathbb{R}$ operator $s_2$-convex function on the interval $I$ for operators in $K\subseteq B(H)^+$. Then
for all positive operators $A$ and $B$ in $K$ with spectra in $I $, the inequality

\begin{multline}\label{2.5}
\int_{0}^{1}\langle f(tA+(1-t)B)x,x\rangle\langle g(tA+(1-t)B)x,x\rangle dt\\
\leq\frac{1}{s_1+s_2+1}M(A,B)(x)+\beta(s_1+1, s_2+1)N(A,B)(x).
\end{multline}

holds for any $x\in H$ with $\|x\| = 1$.
\end{theorem}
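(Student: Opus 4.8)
The plan is to reduce the operator statement to the scalar case via the function $\varphi_{x,A,B}(t)=\langle f((1-t)A+tB)x,x\rangle$ and its analogue for $g$, and then mimic the argument of Kirmaci et al.\ (Theorem~\ref{t3}) pointwise in $x$. Fix $x\in H$ with $\|x\|=1$. By the lemma relating operator $s$-convexity of $f$ on $[A,B]$ to scalar $s$-convexity of $\varphi_{x,A,B}$, both $\psi(t):=\langle f(tA+(1-t)B)x,x\rangle$ and $\chi(t):=\langle g(tA+(1-t)B)x,x\rangle$ are $s_1$-convex and $s_2$-convex respectively on $[0,1]$ (note the roles of $A$ and $B$ are swapped relative to the lemma's parametrisation, which only relabels $t\mapsto 1-t$ and changes nothing). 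Moreover, by the lemma following the definition of operator $s$-convexity, $f(C)\geq 0$ and $g(C)\geq 0$ for every $C\in K$, so $\psi,\chi\geq 0$ on $[0,1]$; this nonnegativity is what lets us multiply the two one-variable inequalities without reversing anything.

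Next I would write down, for each $t\in[0,1]$, the scalar bounds
\[
\psi(t)\leq t^{s_1}\langle f(A)x,x\rangle+(1-t)^{s_1}\langle f(B)x,x\rangle,\qquad
\chi(t)\leq t^{s_2}\langle g(A)x,x\rangle+(1-t)^{s_2}\langle g(B)x,x\rangle,
\]
which come directly from $s$-convexity of $\psi$ and $\chi$ applied with the two endpoints $t=1$ and $t=0$. Since all four quantities $\langle f(A)x,x\rangle,\langle f(B)x,x\rangle,\langle g(A)x,x\rangle,\langle g(B)x,x\rangle$ are nonnegative (again by the positivity lemma) and $\psi(t),\chi(t)\geq0$, I may multiply the two inequalities to obtain
\[
\psi(t)\chi(t)\leq t^{s_1+s_2}\,p_A q_A+(1-t)^{s_1+s_2}\,p_B q_B+\bigl(t^{s_1}(1-t)^{s_2}+t^{s_2}(1-t)^{s_1}\bigr)\,\tfrac{1}{?}\cdots,
\]
where $p_A=\langle f(A)x,x\rangle$, etc.; more precisely the cross terms combine into $t^{s_1}(1-t)^{s_2}p_Aq_B+t^{s_2}(1-t)^{s_1}p_Bq_A$.

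Finally I would integrate this pointwise inequality in $t$ over $[0,1]$. The integral $\int_0^1 t^{s_1+s_2}\,dt=\tfrac{1}{s_1+s_2+1}$ produces the coefficient of $M(A,B)(x)=p_Aq_A+p_Bq_B$ (using $\int_0^1(1-t)^{s_1+s_2}dt=\tfrac{1}{s_1+s_2+1}$ as well), while $\int_0^1 t^{s_1}(1-t)^{s_2}\,dt=\beta(s_1+1,s_2+1)$ and, by the symmetry $t\leftrightarrow 1-t$, $\int_0^1 t^{s_2}(1-t)^{s_1}\,dt=\beta(s_2+1,s_1+1)=\beta(s_1+1,s_2+1)$ as well, producing the coefficient of $N(A,B)(x)=p_Aq_B+p_Bq_A$. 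This yields exactly \eqref{2.5}. The only points requiring care—and the closest thing to an obstacle—are: (i) justifying that the scalar function $t\mapsto\psi(t)\chi(t)$ is integrable so the operator-valued integral in \eqref{2.5} makes sense, which follows from continuity of $f$, $g$ and of $t\mapsto tA+(1-t)B$ exactly as in the proof of Theorem~\ref{t6}; and (ii) being sure the nonnegativity hypotheses needed to multiply inequalities are in force, which is supplied by the lemma guaranteeing $f(C),g(C)\geq0$ on $K$. Everything else is the routine Beta-integral bookkeeping already carried out in the scalar setting.
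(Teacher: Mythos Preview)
Your proposal is correct and follows essentially the same route as the paper: obtain the two pointwise bounds from operator $s_1$- and $s_2$-convexity, multiply them, and integrate over $[0,1]$ using the Beta-integral identities. You are, if anything, more careful than the paper in explicitly invoking nonnegativity of $\psi,\chi$ and of the endpoint values before multiplying the inequalities; the paper's own proof passes directly from (\ref{2.7})--(\ref{2.8}) to (\ref{2.9}) without commenting on this point.
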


\begin{proof}
For $x\in H$ with $\|x\|=1$ and $t\in [0,1]$, we have
\begin{equation}\label{2.6}
\langle \big(tA+(1-t)B\big)x,x\rangle =t\langle Ax,x\rangle+(1-t)\langle Bx,x\rangle \in I,
\end{equation}
since $\langle Ax,x\rangle\in Sp(A)\subseteq I$ and $\langle Bx,x\rangle\in Sp(B)\subseteq I$.

Continuity of $f,g$ and (\ref{2.6}) imply that the operator valued integrals $\int_0^1 f(tA+(1-t)B)dt$,
 $~\int_0^1 g(tA+(1-t)B)dt$ and $\int_0^1 (fg)(tA+(1-t)B)dt$ exist.

Since $f$ is operator $s_1$-convex and $g$ is operator $s_2$-convex, therefore for $t$ in $[0,1]$ and $x\in H$ we have
\begin{equation}\label{2.7}
\langle f(tA+(1-t)B)x,x\rangle\leq\langle (t^{s_1}f(A)+(1-t)^{s_1}f(B))x,x\rangle,
\end{equation}
\begin{equation}\label{2.8}
\langle g(tA+(1-t)B)x,x\rangle\leq\langle(t^{s_2}g(A)+(1-t)^{s_2}g(B))x,x\rangle.
\end{equation}

From (\ref{2.7}) and (\ref{2.8}) we obtain

\begin{multline}\label{2.9}
\langle f(tA+(1-t)B)x,x\rangle\langle g(tA+(1-t)B)x,x\rangle\\
\leq t^{s_1+s_2}\langle f(A)x,x\rangle\langle g(A)x,x\rangle+(1-t)^{s_1+s_2}\langle f(B)x,x\rangle\langle g(B)x,x\rangle\\
+t^{s_1}(1-t)^{s_2}[\langle f(A)x,x\rangle\langle g(B)x,x\rangle]\\
+t^{s_2}(1-t)^{s_1}[\langle f(B)x,x\rangle \langle g(A)x,x\rangle].
\end{multline}

Integrating both sides of (\ref{2.9}) over $[0,1]$ we get the required inequality (\ref{2.5}).
\end{proof}

The following theorem is a generalization of Theorem 7 in \cite{kirm} for operator $s$-convex functions.

\begin{theorem}\label{t8}
Let $f: I \rightarrow \mathbb{R}$ be operator $s_1$-convex and $g: I \rightarrow \mathbb{R}$ be $s_2$-convex function on the interval $I$ for operators in $K\subseteq B(H)^+$. Then
for all positive operators $A$ and $B$ in $K$ with spectra in $I$, the inequality

\begin{multline}\label{2.10}
2^{s_1+s_2-1}\left\langle f\left(\frac{A+B}{2}\right)x,x\right\rangle\left\langle g\left(\frac{A+B}{2}\right)x,x\right\rangle\\
\leq\int_{0}^{1}\left\langle f(tA+(1-t)B)x,x\rangle\langle g(tA+(1-t)B)x,x\right\rangle dt \\
+\beta(s_1+1,s_2+1)M(A,B)(x)+\frac{1}{s_1+s_2+1}N(A,B)(x),
\end{multline}

holds for any $x\in H$ with $\|x\| = 1$.
\end{theorem}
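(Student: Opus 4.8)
The plan is to mimic the proof of Theorem~\ref{t7}, but to start from the midpoint $\frac{A+B}{2}$ instead of integrating the product of the pointwise $s$-convexity estimates directly. Throughout, fix $x\in H$ with $\|x\|=1$ and abbreviate $u(t)=\langle f(tA+(1-t)B)x,x\rangle$ and $v(t)=\langle g(tA+(1-t)B)x,x\rangle$. By the first Lemma these are nonnegative on $[0,1]$, and by the characterization lemma $u$ is $s_1$-convex and $v$ is $s_2$-convex on $[0,1]$. Also set $a=\langle f(A)x,x\rangle$, $b=\langle f(B)x,x\rangle$, $c=\langle g(A)x,x\rangle$, $d=\langle g(B)x,x\rangle$, all nonnegative, so that $M(A,B)(x)=ac+bd$ and $N(A,B)(x)=ad+bc$.

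First I would write $\frac{A+B}{2}=\tfrac12\bigl(tA+(1-t)B\bigr)+\tfrac12\bigl((1-t)A+tB\bigr)$ and apply operator $s_1$-convexity of $f$ and operator $s_2$-convexity of $g$, then pair with $x$, to obtain
\[
\left\langle f\left(\tfrac{A+B}{2}\right)x,x\right\rangle\le 2^{-s_1}\bigl(u(t)+u(1-t)\bigr),\qquad
\left\langle g\left(\tfrac{A+B}{2}\right)x,x\right\rangle\le 2^{-s_2}\bigl(v(t)+v(1-t)\bigr).
\]
Since all quantities involved are nonnegative, I would multiply these two estimates, expand the product $\bigl(u(t)+u(1-t)\bigr)\bigl(v(t)+v(1-t)\bigr)$ into its four terms, and integrate over $[0,1]$; using the substitution $t\mapsto 1-t$ to identify $\int_0^1 u(t)v(t)\,dt=\int_0^1 u(1-t)v(1-t)\,dt$, this yields
\[
2^{s_1+s_2-1}\left\langle f\left(\tfrac{A+B}{2}\right)x,x\right\rangle\left\langle g\left(\tfrac{A+B}{2}\right)x,x\right\rangle\le \int_0^1 u(t)v(t)\,dt+\frac12\int_0^1\bigl(u(t)v(1-t)+u(1-t)v(t)\bigr)\,dt.
\]

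Next I would estimate the cross term. The pointwise $s$-convexity inequalities (\ref{2.7}) and (\ref{2.8}) give $u(t)\le t^{s_1}a+(1-t)^{s_1}b$ and $v(1-t)\le (1-t)^{s_2}c+t^{s_2}d$, and symmetrically for $u(1-t)$ and $v(t)$. Multiplying and adding, the coefficient of $ac$ and of $bd$ is $t^{s_1}(1-t)^{s_2}+t^{s_2}(1-t)^{s_1}$, while the coefficient of $ad$ and of $bc$ is $t^{s_1+s_2}+(1-t)^{s_1+s_2}$. Integrating and using $\int_0^1 t^{s_1}(1-t)^{s_2}\,dt=\int_0^1 t^{s_2}(1-t)^{s_1}\,dt=\beta(s_1+1,s_2+1)$ together with $\int_0^1 t^{s_1+s_2}\,dt=\int_0^1(1-t)^{s_1+s_2}\,dt=\frac1{s_1+s_2+1}$, the cross integral is bounded by $2\beta(s_1+1,s_2+1)(ac+bd)+\frac{2}{s_1+s_2+1}(ad+bc)$, hence, after dividing by $2$, by $\beta(s_1+1,s_2+1)M(A,B)(x)+\frac1{s_1+s_2+1}N(A,B)(x)$. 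Substituting this bound into the previous display gives exactly (\ref{2.10}).

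The only delicate point is the same one implicit in Theorem~\ref{t7}: to multiply the two midpoint estimates and keep the inequality one needs $u(t)+u(1-t)\ge 0$ and $v(t)+v(1-t)\ge 0$, which follows from the first Lemma (an operator $s$-convex function on $[0,\infty)$ takes positive operator values, so $\langle f(C)x,x\rangle\ge 0$ for every $C\in K$), and likewise one needs $a,b,c,d\ge 0$ for the cross-term step. Everything else is bookkeeping with the Beta-function integrals, exactly as in the scalar argument of \cite{kirm}.
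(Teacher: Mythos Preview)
Your proof is correct and follows essentially the same route as the paper's: both rewrite $\frac{A+B}{2}$ as the midpoint of $tA+(1-t)B$ and $(1-t)A+tB$, apply the $s_i$-convexity bounds for $f$ and $g$, multiply using nonnegativity, expand into four terms, and integrate over $[0,1]$ with the Beta-function identities. The only cosmetic difference is that the paper bounds the cross terms $u(t)v(1-t)$ and $u(1-t)v(t)$ pointwise before integrating, whereas you integrate first and then estimate the resulting cross integral; the computations are otherwise identical.
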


\begin{proof}
Since $f$ is operator $s_1$-convex and $g$ operator $s_2$-convex, therefore for any $t\in I$ and any $x\in H$ with $\|x\| = 1$ we observe that
\begin{multline*}
\left\langle f\left(\frac{A+B}{2}\right)x,x\right\rangle \left\langle g\left(\frac{A+B}{2}\right)x,x\right\rangle\\
=\left\langle f\left(\frac{tA+(1-t)B}{2}+\frac{(1-t)A+tB}{2}\right)x,x\right\rangle\\
 \times\left\langle g\left(\frac{tA+(1-t)B}{2}+\frac{(1-t)A+tB}{2}\right)x,x\right\rangle
\end{multline*}
\begin{multline*}
\leq\frac{1}{2^{s_1+s_2}}\Big\{\langle f(tA+(1-t)B)x,x\rangle +\langle f(1-t)A+tB)x,x\rangle]\\
\times[\langle g(tA+(1-t)B)x,x\rangle+\langle g((1-t)A+tB)x,x\rangle\Big\}
\end{multline*}
\begin{multline*}
\leq\frac{1}{2^{s_1+s_2}}\Big\{[\langle f(tA+(1-t)B)x,x\rangle \langle g(tA+(1-t)B)x,x\rangle\\
+\langle f((1-t)A+tB)x,x\rangle \langle g((1-t)A+tB)x,x\rangle]\\
+[t^{s_1}\langle f(A)x,x\rangle+(1-t)^{s_1}\langle f(B)x,x\rangle][(1-t)^{s_2}\langle g(A)x,x\rangle+t^{s_2}\langle g(B)x,x\rangle]\\
+[(1-t)^{s_1}\langle f(A)x,x\rangle+t^{s_1}\langle f(B)x,x\rangle][t^{s_2}\langle g(A)x,x\rangle+(1-t)^{s_2}\langle g(B)x,x\rangle]\Big\}
\end{multline*}
\begin{multline*}
=\frac{1}{2^{s_1+s_2}}\Big\{[\langle f(tA+(1-t)B)x,x\rangle \langle g(tA+(1-t)B)x,x\rangle\\
+\langle f((1-t)A+tB)x,x\rangle\langle g((1-t)A+tB)x,x\rangle]\\
+\big(t^{s_1}(1-t)^{s_2}+t^{s_2}(1-t)^{s_1}\big)[\langle f(A)x,x\rangle\langle g(A)x,x\rangle+\langle f(B)x,x\rangle \langle g(B)x,x\rangle]\\
+\big((1-t)^{s_1+s_2}+t^{s_1+s_2}\big)[\langle f(A)x,x\rangle \langle g(B)x,x\rangle+\langle f(B)x,x\rangle \langle g(A)x,x\rangle]\Big\}.
\end{multline*}
By integration over [0,1], we obtain
\begin{multline*}
\left\langle f\left(\frac{A+B}{2}\right)x,x\right\rangle \left\langle g\left(\frac{A+B}{2}\right)x,x\right\rangle\\
\leq\frac{1}{2^{s_1+s_2}}\left(\int_{0}^{1}[\langle f(tA+(1-t)B)x,x\rangle\langle g(tA+(1-t)B)x,x\rangle\right.\\
+\langle f((1-t)A+tB)x,x\rangle\langle g((1-t)A+tB)x,x\rangle]dt\\
\left.+2\beta(s_1+1,s_2+1)M(A,B)(x)+\frac{2}{s_1+s_2+1}N(A,B)(x)\right).
\end{multline*}

This implies the required inequality (\ref{2.10}).
\end{proof}


\begin{thebibliography}{99}

\bibitem{bak} M. Klari$\breve{c}$i$\acute{c}$ Bakula and J. Pe$\breve{c}$ari$\acute{c}$
\textit{Note on some Hadamard-type inequalities},  J. Inequal.
 Pure Appl. Math. \textbf{5} (2004), no. 3, Article 74.

\bibitem{bar} N.S. Barnett, P. Cerone and S.S. Dragomir, \textit{Some new inequalities for Hermite-Hadamard
divergence in information theory}, Stochastic analysis and applications. Vol. 3, 7-19, Nova Sci.
Publ., Hauppauge, NY, 2003.

\bibitem{dra1}
S.S. Dragomir and S. Fitzpatrick, \textit {The Hadamard's inequality for s-convex functions in the second sense}, Demonstratio Math.,\textbf{32}(4)(1999), 687-696.

\bibitem{dra2}
S.S. Dragomir,\textit {The Hermite-Hadamard type inequalities for operator convex functions}, Appl. Math. Comput. \textbf{218}(3)(2011), 766-772.
\bibitem{dra3} S.S. Dragomir and C.E.M. Pearce, \textit{Selected Topics on Hermite-Hadamard Inequalities and applications},
(RGMIA Monographs http:// rgmia.vu.edu.au/ monographs/ hermite hadamard.html), Victoria
University, 2000.
\bibitem{fur}
T. Furuta, J. Mi$\acute{c}$i$\acute{c}$ Hot, J. Pe$\breve{c}$ari$\acute{c}$ and Y. Seo, \textit {Mond-Pe$\breve{c}$ari$\acute{c}$
 Method in Operator Inequalities. Inequalities for Bounded Selfadjoint Operators on a Hilbert Space}, Element, Zagreb,
2005.

\bibitem{hud} H. Hudzik and L. Maligranda, \textit {Some remarks on s-convex functions}, Aequationes Math., \textbf{48}
(1994), 100-111.

\bibitem{kik} E. Kikianty, \textit{Hermite-Hadamard inequality in the geometry of Banach spaces}, PhD thesis,
Victoria University, 2010.

\bibitem{kirm} U.S. Kirmaci, M.K. Bakula, M.E. \"{O}zdemir and J. Pe\v{c}ari\'c, \textit {Hadamard-type inequalities for
s-convex functions}, Appl. Math. and Compt., \textbf{193} (2007), 26-35.

\bibitem{mos} M. S. Moslehian and H. Najafi, \textit {Around operator monotone functions}, Integr. Equ. Oper. Theory \textbf{71} (2011), 575-582.

\bibitem{pach} B. G. Pachpatte, \textit {On some inequalities for convex functions}, RGMIA Res.
Rep. Coll., \textbf{6} (E), (2003).

\bibitem{tun}
M.Tun\c{c}, \textit {On some new inequalities for convex functions}, Turk. J. Math. \textbf{36} (2012), 245-251.

\bibitem{wu} S.Wu, \textit{On the weighted generalization of the Hermite-Hadamard inequality and its applications},
Rocky Mountain J. Math. \textbf{39} (2009), no. 5, 1741-1749.

\end{thebibliography}
\end{document}